\newcommand{\required}[1]{\section*{\hfil \sharp1\hfil}}
\newcommand{\Beq}{\begin{equation}}
\newcommand{\Eeq}{\end{equation}}
\newcommand{\beq}{\begin{equation*}}
\newcommand{\eeq}{\end{equation*}}
\newcommand{\bal}{\begin{align}}
\newcommand{\eal}{\end{align}}
\newtheorem{theorem}{Theorem}
\newtheorem{Lemma}{Lemma}
\newtheorem{prop}{Proposition}
\newtheorem{definition}{Definition}
\theoremstyle{definition}
\newtheorem{claim}{Claim}
\newcommand{\ac}[1]{\begin{quotation}\textbf{Anuj's comment:\
		}{\textit{\sharp1}}\end{quotation}}
\newcommand{\rc}[1]{\begin{quotation}\textbf{Rohit's comment:\
		}{\textit{\sharp1}}\end{quotation}}
\title{ Support theorem for the transverse ray transform of tensor fields of rank 2}
\date{}
\author{Anuj Abhishek }
\begin{document}
	\maketitle
	\begin{abstract}
	\noindent Let ($M,g$) be a simple, real analytic, Riemannian manifold with boundary and of dimension $n\geq 3$. In this work, we prove a support theorem for the transverse ray transform of tensor fields of rank $2$ defined over such manifolds. More specifically, given a symmetric tensor field $f$ of rank 2, we show that if the transverse ray transform of $f$ vanishes over an appropriate open set of maximal geodesics of $M$, then the support of $f$ vanishes on the points of $M$ that lie on the union of the aforementioned open set of geodesics. 
	\end{abstract}
	\textbf{Keywords:} Transverse ray transform, Analytic microlocal analysis, Support theorem.\\\\
	\noindent\textbf{Mathematics Subject Classification (2010)}: 47G10, 47G30, 53B21
\section{Introduction}

In this article, our main goal is to prove a support theorem for the transverse ray transform (TRT) of a symmetric tensor field of rank 2. transverse ray transforms of such tensor fields appear quite naturally in the study of polarization tomography. The general physical principle behind polarization tomography is fairly simple to understand. The anisotropy in the medium characteristics, such as magnetic permeability tensor and dielectric permeability tensor, polarizes the electromagnetic waves passing through it. By measuring the polarization of a large number of rays passing through the medium, one is then able to detect and measure the anisotropy in the medium characteristics. Due to the transverse nature of electromagnetic rays, the polarization measurements along a ray which is obtained in the form of an integral along that ray depends only on the component of the desired medium characteristic that is ``transverse" to the ray direction. Hence the central problem of polarization tomography is to reconstruct a medium characteristic from the data which is in the form of the transverse ray transform of the medium characteristic. For a more detailed discussion, see \cite[Chapter 5]{Sharafutdinov_Book}.\\
%\par \noindent Let us assume that we are given a homogeneous medium $M$ (i.e. a medium devoid of free charges and currents ). Assume that the magnetic permeability tensor of this medium is unity, however let us denote the dielectric permeability tensor at any point $x\in M$ by $\varepsilon(x)$. Furthermore, we will assume that the medium is quasi-isotropic, i.e. its dielectric permeability tensor is of the form $\varepsilon_{ij}(x)=c\delta_{ij}+\frac{1}{k}\chi_{ij}(x)$. Clearly, when $\chi(x)\equiv0$, such medium will be isotropic.
 \par \noindent Consider a simple, real analytic, Riemannian manifold $M$ of dimension $n\geq 3$ with an analytic metric $g$. Let $[0,l(\gamma)]\ni t \mapsto \gamma(t)$ be a geodesic of the manifold $M$ with end points on the boundary and $\eta(t)$ be a vector field parallel along $\gamma(t)$ and orthogonal to $\dot{\gamma}(t)$ for every $t$. Sharafutdinov defines the transverse ray transform of a symmetric tensor field $f$ of rank 2 as \cite [Chapter 5]{Sharafutdinov_Book}:
$$ Jf(\gamma,\eta)=\int_{0}^{l(\gamma)}f_{ij}(\gamma(t))\eta^{i}(t)\eta^{j}(t)dt\quad \quad (\eta(t)\in \gamma^{\perp}(t))$$
 \noindent Such transforms have been studied by several authors, see e.g.\ \cite{Sharafutdinov_Book},\cite{Novikov2007},\cite {Desai2016},\cite{Lionheart2015a},\cite{Hammer2004}, \cite{Sharafutdinov2007}, in the context of a multitude of physical problems like polarization tomography, diffraction strain tomography and other such imaging modalities. In \cite{Sharafutdinov_Book}, Sharafutdinov proves an injectivity result for the transverse ray transform on a Compact Dissipative Riemannian manifold (CDRM). After the pioneering work of Sharafutdinov\cite{Sharafutdinov_Book} and Lionheart and Withers \cite{Lionheart2015a} who provided reconstruction methods for transverse ray transforms, recently, Desai and Lionheart have given an improved reconstruction algorithm for transverse ray transform for symmetric $2$- tensor fields in the Euclidean setting, see \cite{Desai2016a}. \newline
 \noindent In this work, we prove a support theorem for transverse ray transform of symmetric $2$- tensor fields defined on a compact, simple, real analytic Riemannian manifold. Apart from their theoretical significance, support theorems are useful for practical reasons in various tomography problems. Having a support theorem for an integral transform of a function or a tensor field tells us that we can reconstruct the desired function or the tensor field in the exterior of a given region solely by tomographic measurements in the exterior of the given region. The injectivity result for such transforms follows as a corollary of our more general result.  We use the tools of analytic microlocal analysis to prove our results. Such techniques have been extensively used to prove injectivity results and support theorems for very general Radon Transforms and X-ray transforms by several authors, among which we give below a partial list. Analytic microlocal techniques were first used by Boman and Quinto to prove a support theorem for Radon Transforms with real analytic weights in \cite{BQ}. Stefanov and Uhlmann have used analytic microlocal analysis to prove an s-injectivity result for geodesic ray transform of symmetric $2$-tensor fields in \cite{SU2}. Krishnan in \cite{K1} and Krishnan and Stefanov in \cite{KS} prove support theorems for geodesic ray transform of functions and symmetric $2$- tensor fields respectively. Expanding on these works, the authors in \cite{Abhishek2017} proved a support theorem for integral moments of symmetric $m$- tensor fields.\newline
 \noindent The organization of the current paper is as follows: In section $2$, we give some definitions and our main theorem in this work. In section $3$, we prove a preliminary result that shows the analytic dependence of a normal vector field on the geodesic along which it is translated in a parallel manner. In section $4$, we prove a microlocal proposition which is an analogue of \cite [Proposition 2]{SU3}. The proof of our main theorem is given in section $5$ along the lines of the proof given by Krishnan in \cite [section 3]{K1}.
 \paragraph{Acknowledgment:}
 I would like to thank Prof. Todd Quinto and Prof. Venky Krishnan for their help and encouragement during the writing of the paper. 

  \section{Definitions and the Main Theorem}
  
  \begin{definition} [Simple Manifold]
  	A compact Riemannian manifold (M,g) with smooth boundary is said to be simple if:\\
  	(i) The boundary of the manifold $\partial M$ is strictly convex: $\langle \nabla_\xi\nu(x),\xi\rangle>0$ for all $\xi \in T_x(\partial M)$ and where $\nu(x)$ is the unit outward pointing normal to the boundary.\\
  	(ii) The map $\exp_x:\exp^{-1}_x(M)\mapsto M$ is a diffeomorphism for all $x\in M$.
  \end{definition}
\noindent Here the second condition implies that any two points $x$ and $y$ in the manifold $M$ are connected by a unique geodesic which depends smoothly on the points $x$ and $y$. It is well-known that any such simple manifold is necessarily diffeomorphic to a ball in $\mathbb{R}^n$, see \cite{Sharafutdinov_Book}. Hence in the following analysis, we can assume that the manifold is some domain in $\mathbb{R}^n$. In this article, we work with a fixed simple Riemannian manifold $M$ with a fixed real analytic atlas and a given metric $g$ which is also assumed to be real analytic. Note that a tensor field is said to be real analytic on a set $U$ if it is real analytic in a neighbourhood of the set $U$. Furthermore we will work with symmetric tensor fields $f$ of order 2 on $M$ i.e. $f \in S^2(M)$. In coordinate representation, the tensor field $f=f_{ij}$. We will also assume Einstein convention of summing over any repeated indices and raise or lower index on a tensor field via the metric $g$. As such,$f_{ij}$ and $f^{ij}=f_{kl}g^{ik}g^{jl}$ will be thought of as equivalent representations of the same tensor field.
\par \noindent Let $\widetilde{M}$ be a real analytic extension of $M$ such that $g$ also extends analytically to $\widetilde{M}$. We extend the tensor fields $f\in S^2(M)$ by 0 in $\widetilde{M}\setminus M$. We will think of maximal geodesics in $M$ as restriction of geodesics in $\widetilde{M}$ with distinct end points in $\widetilde{M}\setminus M$. A geodesic will be denoted by $\gamma$. 
\par \noindent Let $\mathcal{A}$ be an open set of geodesics in $\widetilde{M}$ with end  points in $\widetilde{M}\setminus M$. We assume that any geodesic in this set $\mathcal{A}$ is homotopically deformable within the set $\mathcal{A}$ to a geodesic outside $M$. By this we mean that there exists a continuous map which takes a geodesic of $\widetilde{M}$, say $\gamma_0 \in \mathcal{A}$  to some geodesic $\gamma_1 \in \mathcal{A}$ which lies completely outside $M$. We will denote by $M_{\mathcal{A}}$ the set of points of $M$ that belong to $\mathcal{A}$, i.e.\ $M_{\mathcal{A}}=\underset{\gamma \in \mathcal{A}}{\cup}\gamma$ and similarly $\partial_{\mathcal{A}}M=M_{\mathcal{A}}\cap \partial M$. %Next we define what we mean by a geodesically convex set:
%\begin{definition}[Geodesically Convex Set]
%	A set $K$ of the Riemannian manifold $(M,g)$ is said to be geodesically convex, if for any two points in the set, the geodesic connecting them lies entirely in the set $K$.
%\end{definition}
\noindent Finally, with a slight abuse of notation we will denote by $\mathcal{E}^{\prime}(\widetilde{M})$, the set of tensor fields whose components are compactly supported distributions in the $int(\widetilde{M})$.
\noindent Now we are ready to state our main theorem in this work:

	\begin{theorem} \label{Main_theorem}Let $(M,g)$ be a simple real analytic Riemannian manifold of dimension $n\geq 3$ and $\widetilde{M}$ be a real analytic extension of $M$. Let $\mathcal{A}$ be any open set of geodesics of $\widetilde{M}$ such that each geodesic $\gamma \in \mathcal{A}$ is continuously deformable within the set $\mathcal{A}$ to some geodesic outside $M$. Let $f\in \mathcal{E}^{\prime}(\widetilde{M})$ be a symmetric tensor field of order 2 supported in $M$. If $Jf(\gamma,\eta)=0$ for every $\gamma \in \mathcal{A}$ and for every $\eta \in \gamma^{\perp}$, then $f=0$ on $M_{\mathcal{A}}$. 
	\end{theorem}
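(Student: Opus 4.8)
The plan is to prove this support theorem via analytic microlocal analysis, following the standard strategy established by Boman--Quinto, Krishnan, and Stefanov--Uhlmann, which the paper explicitly cites. The central idea is to show that the wavefront set of $f$ contains no analytic singularities at any point of $M_{\mathcal{A}}$, and then to combine this with a unique continuation / support argument using the homotopy hypothesis on $\mathcal{A}$ to conclude that $f$ itself vanishes there.

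First I would set up the analytic microlocal machinery. The transverse ray transform $Jf(\gamma,\eta)$ can be viewed, after localization near a fixed geodesic $\gamma_0 \in \mathcal{A}$, as an integral operator whose Schwartz kernel is (real) analytic, because the metric $g$ is analytic, the geodesics depend analytically on their endpoints (by simplicity), and—crucially—the parallel orthogonal field $\eta(t)$ depends analytically on the geodesic, which is precisely the content of the preliminary result proved in Section~3. The next step is to compute the principal symbol of $J$ and to understand, for a covector $(x_0,\xi_0)$ with $x_0 \in M_{\mathcal{A}}$, which geodesics through $x_0$ are conormal to $\xi_0$; along such geodesics the transform detects the singularity. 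The microlocal Proposition of Section~4 (the analogue of \cite[Proposition 2]{SU3}) should assert that if $Jf$ is analytic on a neighborhood of geodesics conormal to $(x_0,\xi_0)$, then $(x_0,\xi_0) \notin WF_A(f)$, \emph{provided} the symmetric contraction $f_{ij}\eta^i\eta^j$ is not annihilated—i.e. provided enough independent transverse directions $\eta$ are available to recover all components of $f_{ij}$ conormal to $\xi_0$. In dimension $n \geq 3$ the orthogonal complement $\gamma_0^\perp$ has dimension $n-1 \geq 2$, giving a rich enough family of $\eta$'s to pin down the full symmetric $2$-tensor in the conormal direction, which is why the theorem requires $n \geq 3$.

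With the microlocal statement in hand, I would argue that $WF_A(f) \cap \{(x,\xi): x \in M_{\mathcal{A}}\} = \emptyset$: for any such $(x,\xi)$ one can find $\gamma \in \mathcal{A}$ through $x$ with $\dot\gamma(t_x) \perp \xi$, and since $Jf \equiv 0$ on all of $\mathcal{A}$ (hence trivially analytic), the proposition forces $(x,\xi)$ out of the analytic wavefront set. Thus $f$ is real analytic on $\mathrm{int}(M_{\mathcal{A}})$. The final step converts analyticity into actual vanishing. Here the homotopy hypothesis is essential: each geodesic $\gamma \in \mathcal{A}$ can be continuously deformed within $\mathcal{A}$ to a geodesic $\gamma_1$ lying entirely outside $M$, where $f = 0$. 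Following Krishnan's argument in \cite[section 3]{K1}, one uses this deformation together with the analyticity of $f$ and unique continuation for real-analytic functions to propagate the vanishing of $f$ from the exterior geodesic back along the homotopy to $\gamma_0$, concluding $f = 0$ on $M_{\mathcal{A}}$.

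The main obstacle I anticipate is the microlocal proposition of Section~4, specifically verifying that the principal symbol of $J$ is elliptic on the relevant conormal directions—that is, that the map sending the symmetric tensor $f_{ij}$ to the collection $\{f_{ij}\eta^i\eta^j : \eta \perp \dot\gamma,\ \eta \perp \xi\}$ is injective on tensors whose full contraction structure we wish to detect. Unlike the longitudinal ray transform, the transverse transform projects onto directions orthogonal to the geodesic, so one must carefully check that the quadratic forms $\eta^i\eta^j$ for admissible $\eta$ span enough of the space of symmetric tensors conormal to $\xi$. Establishing this ellipticity, and ensuring the analytic-stationary-phase estimate is uniform as the geodesic varies, is where the real work lies; the deformation/support step is then comparatively routine once analyticity on $M_{\mathcal{A}}$ is secured.
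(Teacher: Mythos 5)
Your overall skeleton (microlocal proposition near a single geodesic, then a global argument driven by the homotopy hypothesis) matches the paper, and your identification of the real work --- showing that the quadratic forms $\eta^i\eta^j$ for admissible transverse $\eta$ span enough of $S^2$ to make the symbol elliptic --- is exactly where the paper spends its effort in Section~4 (it exhibits $N=\tfrac{n(n+1)}{2}$ directions $\eta_k$ with $\{\eta_k^{\otimes 2}\}$ linearly independent). However, your final step contains a genuine gap. You claim that $WF_A(f)$ is empty over all of $M_{\mathcal{A}}$, arguing that ``for any such $(x,\xi)$ one can find $\gamma\in\mathcal{A}$ through $x$ with $\dot\gamma\perp\xi$.'' That is false in general: $\mathcal{A}$ is an arbitrary open set of geodesics, and for a point $x\in M_{\mathcal{A}}$ and an arbitrary covector $\xi$ at $x$ there need be no geodesic of $\mathcal{A}$ through $x$ conormal to $\xi$ (take $\xi$ along the direction of the only geodesics of $\mathcal{A}$ passing through $x$). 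The microlocal proposition only removes from $WF_A(f)$ those covectors conormal to geodesics of $\mathcal{A}$, so you cannot conclude that $f$ is real analytic on $\mathrm{int}(M_{\mathcal{A}})$, and the subsequent appeal to unique continuation for real-analytic functions has nothing to stand on.

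The paper's actual mechanism is different and avoids this: it deforms $\gamma_0$ through $\mathcal{A}$ to a geodesic missing $\mathrm{supp}(f)$, surrounds each intermediate geodesic $\gamma_t$ by a ``cone'' $C_t\subset\mathcal{A}$ of geodesics with common vertex, and considers the infimum $t_1$ of parameters beyond which the cones miss $\mathrm{supp}(f)$. If $t_1>0$, the cone $C_{t_1}$ touches $\mathrm{supp}(f)$ at some $p_0$, and the Sato--Kawai--Kashiwara theorem forces $(p_0,\zeta_0)\in WF_A(f)$ for $\zeta_0$ conormal to the cone --- and this \emph{particular} covector is conormal to a geodesic of $\mathcal{A}$, so Proposition~\ref{SUanalogue} applies and gives a contradiction. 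Thus only conormal directions at boundary points of the support ever need to be excluded, which is precisely what the hypotheses deliver. To repair your proof you should replace the ``$f$ is analytic on $M_{\mathcal{A}}$ plus unique continuation'' step with this SKK/first-contact argument (which is in fact what Krishnan does in \cite[section 3]{K1}, the reference you cite); as written, your closing step would not go through.
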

\noindent \textbf{Remark: } For the case $n=2$, observe that the data available by taking the transverse ray transform is the same as the data from the geodesic ray transform up to a diffeomorphism. We also recall that the geodesic ray transform has a non-trivial kernel, see e.g. \cite{Sharafutdinov_Book}.   

\section{Preliminary Results}
  
  Let $f$ be a symmetric tensor field defined on $M$, i.e.\ $f_{ij}=f_{ji}$. Then for each geodesic $\gamma$ with end points on the boundary $\partial M$, and for each $\eta \in \gamma^{\perp}$ where $\gamma^{\perp}$ is the space of vector fields formed by parallel translation along $\gamma$ and orthogonal to $\dot{\gamma}$, the transverse ray transform is given by the bilinear form: 
 $$Jf(\gamma,\eta)=\int_{0}^{l(\gamma)}f_{ij}(\gamma(t))\eta^{i}(t)\eta^{j}(t)dt\quad \quad (\eta(t)\in \gamma^{\perp}(t))$$

 \begin{Lemma}\label{eta_analytic}
  	The field $\eta$ that is orthogonal to the geodesic $\gamma$ and is parallel along it depends on $\gamma$ analytically.
  \end{Lemma}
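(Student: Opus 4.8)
The plan is to reduce everything to the classical theorem on analytic dependence of solutions of ordinary differential equations on their initial data and parameters, exploiting that the metric $g$ — and hence the Christoffel symbols $\Gamma^i_{jk}$ — are real analytic. First I would fix a parametrization of the geodesics in question. Since $M$ (and $\widetilde{M}$) may be taken to be a domain in $\mathbb{R}^n$, a geodesic is determined by its initial data $(x,v)$ consisting of an initial point $x$ and an initial velocity $v=\dot{\gamma}(0)$, and the open set $\mathcal{A}$ corresponds to an open set of such pairs. The geodesic itself solves the second order system $\ddot{\gamma}^{\,i}(t)+\Gamma^i_{jk}(\gamma(t))\,\dot{\gamma}^{\,j}(t)\dot{\gamma}^{\,k}(t)=0$, whose coefficients are real analytic in the spatial variable. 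Consequently, by the analytic version of the existence–uniqueness theorem for ODEs, the map $(x,v,t)\mapsto\gamma_{x,v}(t)$, together with its velocity $\dot{\gamma}_{x,v}(t)$, is real analytic jointly in all of its arguments.

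Next I would write down the defining equation for $\eta$. Requiring $\eta$ to be parallel along $\gamma$ means $\dot{\eta}^{\,i}(t)+\Gamma^i_{jk}(\gamma(t))\,\dot{\gamma}^{\,j}(t)\,\eta^k(t)=0$, a first order linear system in $\eta$. Its coefficient matrix is built from $\Gamma^i_{jk}(\gamma_{x,v}(t))$ and $\dot{\gamma}_{x,v}(t)$, both of which are real analytic in $(x,v,t)$ by the previous step. Applying once more the theorem on analytic dependence on initial conditions and parameters — now to this linear equation — I conclude that the solution $\eta(t)=\eta_{x,v,\eta_0}(t)$ with initial value $\eta(0)=\eta_0$ depends real analytically on $(x,v,\eta_0,t)$.

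It remains to handle the orthogonality constraint $\eta\in\gamma^{\perp}$. Because the Levi-Civita connection is compatible with $g$, the quantity $\langle\eta(t),\dot{\gamma}(t)\rangle$ is constant along $\gamma$; hence if the initial vector $\eta_0$ is chosen orthogonal to $v=\dot{\gamma}(0)$, then $\eta(t)$ is orthogonal to $\dot{\gamma}(t)$ for all $t$ automatically. The orthogonal complement $v^{\perp}$ depends analytically on $(x,v)$, since a basis for it can be produced from analytic frame fields by a process that preserves analyticity, so restricting $\eta_0$ to this complement does not destroy analyticity. Combining the three steps shows that $\eta$ depends analytically on the geodesic $\gamma$, as claimed.

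I expect the only real subtlety — rather than a genuine obstacle — to be bookkeeping: setting up the parametrization of $\mathcal{A}$ and of the bundle $\gamma^{\perp}$ so that ``analytic dependence on $\gamma$'' has a precise meaning, and checking that the analytic, rather than merely smooth, dependence theorem applies, which hinges squarely on the real analyticity of $g$. Once the parametrization is fixed, the argument is a direct, essentially formal, application of the analytic ODE dependence results.
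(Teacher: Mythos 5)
Your proposal is correct and follows essentially the same route as the paper: both reduce the claim to the classical theorem on real analytic dependence of ODE solutions on initial data, using analyticity of $g$ and hence of the Christoffel symbols. The only cosmetic difference is that the paper augments the parallel transport equation with the geodesic equation into a single autonomous first order system and applies the dependence-on-initial-conditions theorem once, whereas you apply the theorem twice (first to the geodesic flow, then to the linear transport equation with analytic coefficients); your extra remarks on preservation of orthogonality are a harmless addition.
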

\begin{proof}
	Note that $\eta(t)$ is a parallel translate along $\gamma$\cite[Page 151]{Sharafutdinov_Book} and satisfies\cite[Theorem 4.13]{Lee2006}:
	$$\dot{\eta}^k(t)=-\eta^j(t)\dot{\gamma}(t)\Gamma^k_{ij}(\gamma(t));\quad \quad \eta(0)=\eta_{0}$$
	where $\Gamma^k_{ij}$ represents Christoffel symbols. Let us rewrite this equation as:
	\begin{equation}\label{translate_eqn}
	\dot{\eta}^k(t)=F(\eta,\gamma,\dot{\gamma});\quad \quad \eta(0)=\eta_0
	\end{equation}
 $F(\eta,\dot{\gamma},\gamma)$ is analytic in its arguments because we assume that metric $g$ is analytic and hence Christoffel symbols $\Gamma^k_{ij}$ are analytic as well. Let us rename $\dot{\gamma}(t)$ as $\zeta(t)$ and $\gamma(t)$ as $z(t)$. Then we recast equation \ref{translate_eqn} in to the following system:
	\begin{align*}
	\dot{\eta}^k(t)&=F(\eta,\zeta,z);&\eta(0)=\eta_0\\
	\dot{\zeta}^k(t)&=\Gamma^k_{ij}z^i(t)z^j(t);  & \zeta(0)=\zeta_{0}(=\dot{\gamma}(0))\\
	\dot{z}^k(t)&=\zeta^k(t); &z(0)=z_{0}(=\gamma(0))
	\end{align*}

	\noindent (In writing $\dot{\zeta}^k(t)=\Gamma^k_{ij}z^i(t)z^j(t)$, we have made use of the geodesic equation, since $z(t)=\gamma(t)$ and $\zeta(t)=\dot{\gamma}(t)$.) Together the three equations can be rewritten as a new system:
	\begin{equation}
	\dot{\tilde{\eta}}(t)=\tilde{F}(\tilde{\eta});\quad \quad \tilde{\eta}(0)=\tilde{\eta}_0
	\end{equation}
	where $\tilde{\eta}=(\eta,\zeta,z)$ and $\tilde{F}$ is the RHS of the system written above. Clearly $\tilde{F}$ is also analytic. Hence by \cite[Proposition 6.2]{Taylor2010}, $\tilde{\eta}(t)$ depends analytically on initial conditions $\tilde{\eta}_0$. In particular, $\eta(t)$ depends analytically on $\gamma(0)$ and $\dot{\gamma}(0)$. But $\gamma(0)$ and $\dot{\gamma}(0)$ are the parameters (starting point and initial direction respectively) which uniquely determine a geodesic on the manifold. This shows that $\eta(t)$ depends on geodesics $\gamma$ analytically.
\end{proof}
\noindent Before we move on further, we would like to show how to extend the definition of the transverse ray transform to distribution valued tensor fields.  
\subsection*{Extension of the definition of the transverse ray transform to distribution valued tensor fields}
\noindent Let $\gamma=\gamma_{x,\theta}:[\tau_{-}(x,\theta),0]\to M$ be a maximal geodesic of $M$ with initial conditions $\gamma(0)=x$ and $\dot{\gamma}(0)=\theta$. Recall that these maximal geodesics can be thought of as restriction of geodesics of $\widetilde{M}$ with end points in $\widetilde{M} \setminus M$. Let us denote by $I^{t,0}_\gamma$, the operator of parallel translation along the geodesic $\gamma$ i.e.\ $I^{t,0}_\gamma:T_{\gamma(t)}M \mapsto T_{\gamma(0)}M$.  We also recall that this operator is a linear isomorphism between the respective tangent spaces.  Since $\eta(t)$ is a parallel translate along $\gamma(t)$, hence there exists a unique $\eta_0 \in T_{\gamma(\tau_{-})}M$ such that $\eta(t)=I^{0,t}_\gamma(\eta_0)$. Let $$\Gamma_{-}:=\big{\{}(x,\xi)\in TM|\ x\in \partial M, |\xi|=1,\langle\xi,\nu(x)\rangle < 0\big{\}}$$ where $\nu(x)$ is the unit outer normal to $\partial M$ at $x$. Next, consider the space of symmetric tensor fields of rank $2$ on $T^*M$ which will be denoted by: $S^2(T^*M)$. We know that there is a canonical embedding that identifies tensor fields on $M$ with tensor fields on $T^*M$ which are independent of the second argument, see \cite[3.4.7]{Sharafutdinov_Book}. Under this identification, the field $f\in S^2(M)$ will be identified with the corresponding field in $S^2(T^*M)$ which we will again refer to as $f$. Further, if we take the restriction of the projection operator for the tangent bundle, $p:\Gamma_{-}\to M$, then this induces a smooth map between the space of tensor fields, $p^*:S^2M \to S^2(\Gamma_{-})$ \cite[Proposition 11.9]{Lee2003}. Now using the above mentioned identification of tensor fields in $S^2(M)$ with tensor fields in $S^2(T^*M)$ which are independent of the second argument, let us consider the space of pullback of such tensor fields $p^*(S^2(T^*M))$ and denote it by $S^2\Pi_M$. Following Sharafutdinov, we define the operator:
$$\widetilde{J}:C^{\infty}(S^2(T^*M))\mapsto C^{\infty}(S^2\Pi_M) $$ where $\widetilde{J}$ is given by the relation
$$\widetilde Jf(x,\theta)=\int_{\tau_{-}(x,\theta)}^{0}I^{t,0}_\gamma(P_{\dot{\gamma}(t)}f(\gamma(t))dt \quad \quad (x,\theta)\in \Gamma_{-}$$ Here,
$(P_{\dot{\gamma}(t)}f)_{ij} = \big(\delta^k_i-\frac{1}{|\dot{\gamma}(t)|^2}\dot{\gamma}(t)_i\dot{\gamma}(t)^k\big)\big(\delta^l_j-\frac{1}{|\dot{\gamma}(t)|^2}\dot{\gamma}(t)_j\dot{\gamma}(t)^l\big)f_{kl}=\big((Id-\frac{\dot\gamma \dot{\gamma}^t}{|\dot\gamma|^2})f(Id-\frac{\dot\gamma \dot{\gamma}^t}{|\dot\gamma|^2})\big)_{ij}$.  From \cite[equation 5.2.5]{Sharafutdinov_Book}, we have the following: \begin{equation}
\langle \widetilde Jf (x,\theta),\eta_0 \otimes \eta_0\rangle= Jf(\gamma,\eta)
\end{equation} 
Thus in order to make sense of the transverse ray transform for distribution valued tensor fields, all we need to do is to interpret $\widetilde Jf (x,\theta)$ by duality for compactly supported distribution valued tensor fields $f$. For this we will need an expression for the adjoint $(\widetilde{J})^*$. First let $f\in L^2(M)$ and take any  $\phi(x,\xi) \in C_c^{\infty}(S^2\Pi_M)$. We will consider the following inner product:
\begin{align*}
(\widetilde{J}f,\phi)_{\Gamma_-}&=\int_{\Gamma_{-}}\bar{\phi}(x,\theta)\int_{\tau_{-}(x,\theta)}^{0}I^{t,0}_\gamma(P_{\dot{\gamma}(t)}f(\gamma(t)))dtd\mu(x,\theta)\\
\end{align*}
Let us now define a function $\phi^{\sharp}(\gamma(t),\dot{\gamma}(t))$ such that it is constant along the geodesic and is equal to $\bar{\phi}(x,\theta)$ on $\Gamma_{-}$, i.e. $$\nabla_{\dot\gamma}\phi^{\sharp}(\gamma(t),\dot{\gamma}(t))=0, \quad \quad \phi^{\sharp}(\gamma(0),\dot{\gamma}(0))=\bar{\phi}(x,\theta).$$ This means that $\phi^{\sharp}(\gamma(t),\dot{\gamma}(t))$ is formed by parallel translation of $\bar\phi(x,\theta)$ along the geodesic $\gamma(t)$. Then the above can be rewritten as:
\begin{align*}
(\widetilde{J}f,\phi)_{\Gamma_{-}}
&=\int_{\Gamma_{-}}\int_{\tau_{-}(x,\theta)}^{0}I^{t,0}_\gamma(P_{\dot{\gamma}(t)}f(\gamma(t)))\bar\phi(x,\theta)dtd\mu(x,\theta)\\
&=\int_{\Gamma_{-}}\int_{\tau_{-}(x,\theta)}^{0}P_{\dot{\gamma}(t)}f(\gamma(t))I^{0,t}_\gamma\bar\phi(x,\theta)dtd\mu(x,\theta)\\
&=\int_{\Gamma_{-}}\int_{\tau_{-}(x,\theta)}^{0}P_{\dot{\gamma}(t)}\big(f(\gamma(t))\big)\phi^{\sharp}(\gamma(t),\dot{\gamma}(t))dtd\mu(x,\theta)\\
&=\int_{\Gamma_{-}}\int_{\tau_{-}(x,\theta)}^{0}f(\gamma(t))P_{\dot{\gamma}(t)}\big(\phi^{\sharp}(\gamma(t),\dot{\gamma}(t))\big)dtd\mu(x,\theta)\\
\end{align*}
Now we apply Santalo's formula \cite[Lemma 3.3.2]{Sharafutdinov} to the above and get:
\begin{align*}
(\widetilde{J}f,\phi)_{\Gamma_{-}}&=\int_{SM}f(x)P_{\xi}\big(\phi^{\sharp}\big)d\sigma\quad \text{ where } d\sigma \text{ is a measure on }SM\\
&=\int_M f(x)\int_{S_xM}P_{\xi}\big(\phi^{\sharp}\big)d\sigma_x(\xi) d\text{ Vol}(x)\\
&=(f,({\widetilde{J}})^*\phi)_{L^2(M)}
\end{align*}
where we have the adjoint of $\widetilde{J}$ given by \begin{align*}
({\widetilde{J}})^*\phi&=\int_{S_xM}P_{\xi}\big(\phi^{\sharp}\big)d\sigma_x(\xi)\\
&=\int_{S_xM}(Id-\frac{\xi \xi^t}{|\xi|^2})\phi^{\sharp}(Id-\frac{\xi \xi^t}{|\xi|^2})d\sigma_x(\xi)
\end{align*} 
Now for a compactly supported distribution valued tensor field $f$ and for  $\phi \in C_c^{\infty}(S^2\Pi_M)$  we define,
$$\langle\widetilde{J}f,\phi\rangle:=\langle f, (\widetilde{J})^*\phi\rangle.$$
\textbf{Remark:} To understand $ Jf(\gamma,\eta)=\langle \widetilde Jf (x,\theta),\eta_0 \otimes \eta_0\rangle $ when $f$ is a compactly supported distribution, we multiply $\eta_0\otimes\eta_0$ by a compactly supported function $\Psi(x,\theta)$ such that $\Psi\times(\eta_0\otimes\eta_0)$ is in $C_c^\infty (S^2\Pi_M)$. Then we follow the procedure described above to interpret the tranverse ray transform for such fields.

 \section{A microlocal proposition}
 The following proposition is an analogue of \cite[Proposition 2]{SU3}.
 \begin{prop}\label{SUanalogue}
 	Let $(x_0,\xi_0) \in T^*M\setminus0$ and let $\gamma_0$ be a fixed simple geodesic through $x_0$ normal to $\xi_0$. Let $Jf(\gamma,\eta)=0$ for some symmetric 2-tensor $f \in \mathcal{E}^{\prime}(\widetilde{M})$ supported in $M$ and for all $\gamma \in nbd.(\gamma_0)$ and for all $\eta \in \gamma^{\perp}$. Then $$(x_0,\xi_0)\notin WF_A(f).$$
 \end{prop}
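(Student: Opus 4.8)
The plan is to establish the microlocal statement by the analytic microlocal method of Stefanov--Uhlmann and Krishnan, realizing $\wt J$ as an analytic Fourier integral operator and probing $f$ at $(x_0,\xi_0)$ with a complex-phase (FBI) test. First I would work in the real-analytic coordinates in which $M$ is a domain in $\Rb^n$, translate so that $x_0=0$, and normalize $\dg_0(0)$ and $\xi_0$ so that $\xi_0$ annihilates $\dg_0(0)$. I would parametrize the geodesics near $\gamma_0$ as $\gt$, letting $z$ run over a small analytic hypersurface through $x_0$ transverse to $\gamma_0$ and $\o$ over directions near $\dg_0(0)$; for a fixed $\o$ the map $(z,t)\mapsto\gt(t)$ is then a local real-analytic diffeomorphism onto a neighbourhood of $x_0$. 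The two analyticity inputs the argument needs are that this map is real analytic (simplicity and analyticity of $g$) and, by Lemma \ref{eta_analytic}, that the orthogonal parallel field $\eta$ depends analytically on $(z,\o)$.

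Next, for a fixed admissible pair $(\o,\eta_0)$ I would build the FBI test field
$$
\phi_\lambda=e^{\I\lambda\vp(z)}\,\chi(z)\,a(z,\lambda)\,\big(\eta_0\otimes\eta_0\big),
$$
where $\vp$ is a complex quadratic phase in the transverse geodesic parameter $z$ with $\mathrm{Im}\,\vp\gtrsim|z|^{2}$, $\chi$ is an analytic cutoff supported near $z=0$ (the geodesic $\gamma_0$), and $a$ is an elliptic classical analytic symbol; thus $\phi_\lambda$ is an admissible test field in the sense of the Remark above. Since $Jf(\gt,\eta)=0$ for all geodesics near $\gamma_0$ and all $\eta\in\gt^{\perp}$, the distributional pairing gives $\langle\wt Jf,\phi_\lambda\rangle=0$, and the adjoint identity $\langle\wt Jf,\phi_\lambda\rangle=\langle f,(\wt J)^{*}\phi_\lambda\rangle$ turns this into the vanishing of $\langle f,(\wt J)^{*}\phi_\lambda\rangle$. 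Using $(\wt J)^{*}\phi_\lambda=\int_{S_xM}P_{\xi}(\phi_\lambda^{\sharp})\,d\sigma_x(\xi)$ together with the local foliation by the $\gt$, this pairing is, up to a nonvanishing Jacobian factor, an FBI transform of $f$ with phase $\Phi(x)$ whose unique real critical point is $x=x_0$ and $\D\Phi(x_0)=\xi_0$, with matrix amplitude assembled from $\eta_0\otimes\eta_0$, the projection $P_{\dg}$, and the change of variables.

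The decisive step is Sjöstrand's analytic stationary phase applied after a complex deformation of the contour: since $\langle f,(\wt J)^{*}\phi_\lambda\rangle\equiv 0$ it is in particular exponentially small, which forces its leading symbol to be exponentially small, i.e.\ $\langle\wh f(x_0,\xi_0),\eta_0\otimes\eta_0\rangle$ decays exponentially for every admissible transverse field $\eta_0\perp\dg_0(0)$ and every admissible direction $\dg$ near $\dg_0(0)$ with $\xi_0(\dg)=0$. It then remains to check that these contractions are jointly elliptic, i.e.\ that the rank-one symmetric tensors $\eta_0\otimes\eta_0$ span $S^2$ as $\dg$ ranges over directions in $\xi_0^{\perp}$ near $\dg_0(0)$ and $\eta_0$ over $\dg^{\perp}$. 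This is exactly where $n\geq 3$ enters: for $n\geq 3$ one may tilt $\dg$ within $\xi_0^{\perp}$ and choose $\eta_0$ with components both along and transverse to $\xi_0^{\perp}$, recovering every symmetric monomial and hence all of $S^2(\Rb^n)$, so that the only tensor annihilated by all the contractions is $0$; when $n=2$ the available span is one-dimensional, consistent with the Remark after Theorem \ref{Main_theorem}. Consequently every component of $\wh f(x_0,\xi_0)$ decays exponentially and $(x_0,\xi_0)\notin WF_A(f)$.

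I expect the main obstacle to be carrying out this ellipticity/spanning step at the level of analytic symbols while preserving the analytic machinery: the amplitude $a$, the direction $\o$, and the field $\eta_0$ must be chosen so that the associated principal symbols remain real analytic and, upon varying $(\o,\eta_0)$ over the admissible family, jointly elliptic on $S^2$ at $(x_0,\xi_0)$, while the phase $\Phi$ stays non-degenerate with $\D\Phi(x_0)=\xi_0$ and the complex contour deformation in the stationary phase estimate is controlled uniformly in $\lambda$. The transverse projection $P_{\dg}$ together with the constraint $\eta_0\perp\dg$ restricts the available rank-one tensors, so the spanning statement must be verified carefully, and reconciling it with the analyticity bookkeeping is where most of the technical effort lies.
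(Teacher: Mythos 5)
Your overall strategy coincides with the paper's: analytic semi-geodesic coordinates near $\gamma_0$, an FBI/complex-stationary-phase pairing in the spirit of Stefanov--Uhlmann, analytic dependence of $\eta$ on the geodesic (Lemma \ref{eta_analytic}), and a final ellipticity step reducing the conclusion to the fact that the admissible rank-one tensors $\eta\otimes\eta$ span $S^2$. The difficulty is that this last step --- the only part of the proposition that is genuinely specific to the transverse ray transform and not already contained in \cite[Proposition 2]{SU3} --- is exactly the one you do not prove: you assert that tilting $\dot\gamma$ within $\xi_0^{\perp}$ and varying $\eta_0$ ``recovers every symmetric monomial,'' and then defer it as something ``to be verified carefully.'' Note that producing $n$ linearly independent admissible $\eta$'s is not enough: to recover the off-diagonal components of $f$ one needs, for each pair $\eta_p,\eta_q$, a linear combination $a_p\eta_p+a_q\eta_q$ that is \emph{again} orthogonal to some admissible $\theta\in\xi_0^{\perp}\cap\mathrm{nbd}(\theta_0)$, since it is $(a_p\eta_p+a_q\eta_q)^{\otimes 2}$ that supplies the mixed term $\eta_p\otimes\eta_q+\eta_q\otimes\eta_p$. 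The constraint $\eta\perp\theta$ with $\theta$ confined to $\xi_0^{\perp}\cap\mathrm{nbd}(\theta_0)$ makes this nontrivial; the paper devotes a separate Claim to it (three cases according to whether $\langle\theta_p,\eta_q\rangle$ and $\langle\theta_q,\eta_p\rangle$ vanish), yielding $n(n+1)/2$ tensors $\eta_k^{\otimes 2}$ that are linearly independent in $S^2$. Without this, your ``jointly elliptic'' conclusion is unsupported at the decisive point.

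A second, more technical gap: you take $\chi$ to be ``an analytic cutoff supported near $z=0$,'' but no nonzero compactly supported function is real analytic. The standard remedy --- the one the paper uses, following Boman--Quinto and \cite{SU3} --- is a sequence of cutoffs $\chi_N$ with $|\partial^{\alpha}\chi_N|\leq (CN)^{|\alpha|}$ for $|\alpha|\leq N$, combined with $N$-fold integration by parts and the choice $N\sim\lambda/(Ce)$ to convert the $(CN/\lambda)^{N}$ remainder into an exponentially small error. With a single smooth cutoff you would not obtain the exponential decay that characterizes the analytic wavefront set, so this substitution is not merely cosmetic.
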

\begin{proof}
	The following argument is adapted from the proof of \cite [ Proposition 2] {SU3}. For the compact, simple, real analytic Riemannian manifold $M$, we can construct analytical semi geodesic coordinates $(x^\prime,x^n)$ in some tubular neighbourhood $U$ of $\gamma_0$. This construction has been worked out and used by Stefanov and Uhlmann in many articles, see e.g.\ \cite[ section 2]{SU3}. Furthermore, we assume $x_0=0$ and $x^\prime=0$ on $\gamma_0$. We can represent $U$ in these co-ordinates by: $U=\{x:|x^\prime|\leq \epsilon\}$ for some $\epsilon<<1$. We can choose $\epsilon$ to be so that the $\{|x^{\prime}|\leq \epsilon; x^n=l^{\underset{-}{+}}\}$ is outside $M$. Then $\xi_0=((\xi_0)^\prime,0)$ with $(\xi_0)_n=0$. Our goal is to show:
	$$(0,\xi_0)\notin WF_A(f).$$ Consider $Z=\{|x|<\frac{7\epsilon}{8}: |x_{n}|=0\}$ and let $x^\prime$ variable be denoted on $Z$ by $z^\prime$. Then $(z^\prime,\theta^\prime)$ are local co-ordinates in nbd.$(\gamma_0)$ given by $(z^\prime,\theta^\prime)\to \gamma_{(z^\prime,0),(\theta^\prime,1)}$. Here, $|\theta^\prime|<<1$ (where, the geodesic is in the direction $(\theta^\prime,1)$).\\ \noindent Let $\{\chi_N(z{\prime})\}$ be a sequence of cutoff functions satisfying the estimates:$$\partial_{\alpha}(\chi_N)\leq (CN)^{|\alpha|};\quad \quad \text{for some }C \text{ and for }  |\alpha|<N.$$ Let $\theta=(\theta^{\prime},1)$. We multiply $$Jf(\gamma_{(z^{\prime},0;\theta )},\eta_{(z^{\prime},0;\theta )})=0$$ by $\chi_N(z^{\prime})e^{i\lambda z^{\prime}.\xi^{\prime}}$, where $\lambda>0$, $\xi^{\prime}$ is in a complex neighbourhood of $(\xi_0)^{\prime}$ and integrate it with respect to $z^{\prime}$ to get:$$\int e^{i\lambda z^\prime(x,\theta^\prime).\xi^\prime}\chi_{N}(x,\theta^\prime)f_{ij}(\gamma_{(z^{\prime},0;\theta )}(t)){\eta^{i}}_{(z^{\prime},0;\theta )}(t) {\eta^{j}}_{(z^{\prime},0;\theta )}(t)dt dz^{\prime} =0$$
	 
	\noindent  By following their arguments verbatim, we get the following equation :
	\begin{equation}\label{eq:1}
		\int e^{i\lambda z^\prime(x,\theta^\prime).\xi^\prime}a_{N}(x,\theta^\prime)f_{ij}(x){b^{i}}(x,\theta^\prime) {b^{j}}(x,\theta^\prime)dx =0
	\end{equation}
	
	\noindent Here, $(x,\theta^\prime)\to a_{N}$ is analytic and satisfies the estimate: 
	\begin{align}	\label{eq:estimate on cutoff}
	|\partial^{\alpha}a_{N}|\leq(CN)^{|\alpha|}, \quad \alpha \leq N,
	\end{align} 
	 Also, note that $b(0,\theta^\prime)={\eta}\in \theta^{\perp}$ and $a_{N}(0,\theta^\prime)=1$.
	
	\noindent To fix ideas let $\xi_0=e_{n-1}$ and the $\theta_0$ corresponding to the geodesic $\gamma_0$ be $e_n$ as in the proof of \cite[Proposition 2]{SU3}. Further, let us choose $\theta(\xi)$ to be a vector depending analytically on $\xi$ in a neighbourhood of the geodesic $\gamma_0$ in the following way: $$\theta(\xi)=\left(\xi_1,\dots,\xi_{n-2},-\frac{\xi_{1}^{2}+\dots+\xi_{n-2}^{2}+\xi_{n}}{\xi_{n-1}},1\right).$$ Since, $\theta $ can be made to analytically depend on $\xi$, we can also make the hyperplane $\theta^{\perp}$ depend analytically on $\xi$. This is to say that every $\eta$ which is orthogonal to $\gamma$ and is parallel along it  also depends analytically on $\xi$. Indeed, this is to be expected as we showed in the proof of Lemma \ref{eta_analytic}, that $\eta(t)$ depends analytically on the initial conditions, in particular on $\theta$ and since $\theta$ depends on $\xi$ analytically, so will $\eta$ depend on $\xi$ analytically. In fact, this can be achieved by carrying out a Gram Schmidt orthogonalization procedure. Consider the vectors: $v_1,\dots,v_{n-1},\theta(\xi)$ such that these form a basis at $T_{\gamma(0)}M$. Next, let $\eta_1(0;\xi)=v_1-\operatorname{proj}_{\theta(\xi)}v_1$. This depends analytically on $\xi$ because $\theta(\xi)$ does and because the projection map is also analytic. Now $\eta_2(0;\xi)=v_2-\operatorname{proj}_{\eta_1(0;\xi)}v_2-\operatorname{proj}_{\theta(\xi)}v_2$. We can construct $\eta_3(0;\xi),\dots,\eta_{n-1}(0;\xi)$ in a similar fashion, and all $\eta_i(0;\xi)$ will depend on $\xi$ analytically. Now the parallel translate of $\eta$ along $\gamma$ can also be shown to depend on $\xi$ analytically through its analytical dependence on initial conditions. It is also clear that if $\eta_i(t) \text{ and } \eta_j(t)$ depend analytically on $\xi$, then any linear combination also depends analytically on $\xi$. \\ \noindent Now with the choice of $\theta(\xi)$ made as above near $\xi=\xi_{0}$, it is easy to check that the following conditions are satisfied :
	\begin{align*}
	\theta(\xi)\cdot \xi&=0, \quad {\theta}^{n}(\xi)=1\quad \quad \text{ and }\\
	\theta(\xi_{0})&=(0,\dots,1)=e_{n}
	\end{align*}
    We will rewrite \eqref{eq:1} using the above mapping in the following form:
	\begin{align}\label{eq:2}
	\int e^{i\lambda \phi(x,\xi)}\tilde{a_{N}}(x,\xi)f_{ij}(x){\tilde{b}}^{i}(x,\xi){\tilde{b}}^{j}(x,\xi)dx =0.
	\end{align}
	\noindent Here $\phi(x,\xi)=z^\prime\cdot\xi^\prime$ is the phase function,  $\tilde{b}(x,\xi)= \eta_{(z^{\prime},0;\theta(\xi) )}(t)$, $t=t(x,\theta(\xi))$ and $z^{\prime}=z^{\prime}(x,\theta(\xi))$, see \cite{SU3}. This phase function has been shown in \cite{SU3} to be non-degenerate in a neighborhood of the geodesic $\gamma_0$.
	
	\noindent Now we quote the following lemma from \cite{SU3}:
	\begin{Lemma} \cite[Lemma 3.2]{SU3}\label{Lemma:1}
		Let, ${\theta(\xi)}$ and $\phi(x,\xi)$ be as above. Then, $\exists$ $ \delta>0$ such that if $$\phi_{\xi}(x,\xi)=\phi_{\xi}(y,\xi)$$ for some $x \in U$, $|y|<\delta$, $|\xi-\xi_0|<\delta$ where $\xi$ is complex, then $y=x$.
	\end{Lemma}
	\noindent We will study the analytic wavefront set of $f$ using Sj\"ostrand's complex stationary phase method. For this assume $x$, $y$ as in Lemma \ref{Lemma:1} and $|\xi_0-\upsilon|<\frac{\delta}{\tilde{C}}$ with $\tilde{C}>>2$ and $\delta<<1$.  Multiply \eqref{eq:2} by
	$$\tilde{\chi}(\xi-\upsilon)e^{i\lambda\left(i\frac{(\xi-\upsilon)^2}{2}-\phi(y,\xi)\right)}$$
	where $\tilde{\chi}$ is the characteristic function of the ball $B(0,\delta)\subset \mathbb{C}^n$ and then integrate w.r.t. $\xi$ to get : 
	\begin{align}\label{eq:3}
	\iint e^{i\lambda \Phi(y,x,\xi,\upsilon)}\tilde{\tilde{a_{N}}}(x,\xi)f_{ij}(x){\tilde{b}}^{i}(x,\xi){\tilde{b}}^{j}(x,\xi)dxd\xi =0.
	\end{align}
	In the above equation, $\tilde{\tilde{a_{N}}}=\tilde{\chi}(\xi-\upsilon)\tilde{a}_{N}$ is another analytic and elliptic amplitude for $x$ close to zero and $|\xi-\upsilon|<\frac{\delta}{\tilde{C}}$ and $$\Phi=-\phi(y,\xi)+\phi(x,\xi)+\frac{i}{2}(\xi-\upsilon)^{2}.$$ Furthermore, $$\Phi_{\xi}=\phi_{\xi}(x,\xi)-\phi_{\xi}(y,\xi)+i(\xi-\upsilon).$$
	To apply the stationary phase method we need to know the critical points of $\xi \mapsto \Phi$. Using lemma \ref{Lemma:1} above we have :
	\begin{enumerate}
		\item If $y=x$, $\exists$ a unique real critical point $\xi_{c}=\upsilon$
		\item If $y \neq x$, there are no real critical points 
		\item Also by Lemma \ref{Lemma:1}, if $y \neq x$, there is a unique complex critical point if $|x-y|<\delta/C_{1}$ and no critical points for $|x-y|>\delta/C_0$ for some constants $C_0 \text{ and }C_1 $ with $C_1 > C_0$.		
	\end{enumerate}
	Define, $\psi(x,y,\upsilon):=\Phi(\xi_c)$. Then at $x=y$
	\begin{center}
		(i) $\psi_{y}(x,x,\upsilon) = -\phi_{x}(x,\upsilon)$ \quad
		(ii) $\psi_{x}(x,x,\upsilon) = \phi_{x}(x,\upsilon)$ \quad
		(iii) $\psi(x,x,\upsilon) =0 $. \quad
	\end{center}
	Now, we split the $x$ integral in \eqref{eq:3} in to two parts : we integrate over $\{x:|x-y|>\delta/C_{0}\}$ for some $C_{0}>1$ and its complement. Since, $|\Phi_{\xi}|$ has a positive lower bound for $\{x:|x-y|>\delta/C_{0}\}$ and there are no critical points of $\xi \to  \Phi$ in this set, we can estimate that integral in the following manner: First note that, $e^{i\lambda \Phi(x,\xi)} = \frac{\Phi_{\xi}\partial_{\xi}}{i\lambda|\Phi_\xi|^{2}}e^{i\lambda \Phi(x,\xi)}$. Using, \eqref{eq:estimate on cutoff} and integrating by parts $N$ times with respect to $\xi$ and the fact that on the boundary $|\xi-\upsilon|=\delta$, we get
	\begin{align}\label{eq:4}
	\left|	\iint_{|x-y|>\delta/C_{0}} e^{i\lambda \Phi(y,x,\xi,\upsilon)}\tilde{\tilde{a_{N}}}(x,\xi)f_{ij}(x){\tilde{b}}^{i}(x,\xi){\tilde{b}}^{j}(x,\xi)dxd\xi \right|\leq C\bigg(\frac{CN}{\lambda}\bigg)^{N}+CNe^{-\frac{\lambda}{C}}
	\end{align}
	
	\noindent We choose $N\leq\lambda/Ce\leq N+1$ to get an exponential error on the right. Now in estimating the integral
	\begin{align}\label{eq:5}
	\left|	\int_{|x-y|\leq\delta/C_{0}} e^{i\lambda \Phi(y,x,\xi,\upsilon)}\tilde{\tilde{a_{N}}}(x,\xi)f_{ij}(x){\tilde{b}}^{i}(x,\xi){\tilde{b}}^{j}(x,\xi)dxd\xi \right|
	\end{align}
	we use \cite[Theorem 2.8]{Sjostrand} and the remark following that to conclude: 
	\begin{align}\label{eq:6}
	\int_{|x-y|\leq\delta/C_{0}}e^{i\lambda \psi(x,\alpha)}f_{ij}(x)B^{ij}(x,\alpha;\lambda)dx=\mathcal{O}(e^{-\lambda/C})
	\end{align}
	where  $\alpha=(y,\upsilon)$ and $B$ is a classical analytical symbol with principal part $\tilde{b}\otimes\tilde{b}$.  See appendix of \cite{Abhishek2017} for a proof of estimates in (\ref{eq:4}) and (\ref{eq:6}).
	\par \noindent Let, $\beta=(y,\mu)$ where, $\mu=\phi_{y}(y,\upsilon)=\upsilon+\mathcal{O}(\delta)$. At $y=0$, we have $\mu=\upsilon$. Also $\alpha \to \beta$ is a diffeomorphism following similar analysis as in \cite[Section 4]{SU3}. If we write $\alpha=\alpha(\beta)$, then the above equation becomes: 
	\begin{align}\label{eq:7}
	\int_{|x-y|\leq\delta/C_{0}}e^{i\lambda \psi(x,\beta)}f_{ij}(x)B^{ij}(x,\beta;\lambda)dx=\mathcal{O}(e^{-\lambda/C})
	\end{align}
	where $\psi$ satisfies (i), (ii) and (iii), and $B$ is a classical analytical symbol as before and :
	\begin{center}
		$\psi_{y}(x,x,\upsilon) = -\mu$,\quad 
		$\psi_{x}(x,x,\upsilon) = \mu$ \quad and \quad 
		$\psi_{y}(x,x,\upsilon) =0 $
	\end{center}
	The symbols in \eqref{eq:7} satisfy : $$\sigma_{P}(B)(0,0,\mu)= {\eta}(\mu) \otimes{\eta}(\mu) = \eta^{\otimes 2}(\mu)$$ where $\eta$ is a vector perpendicular to $\theta$ as before.
	
	\par \noindent We will now show that there exists $N=\frac{n(n+1)}{2}$  unit vectors at $x_0$, say, ${\eta}_1, {\eta}_2,\dots,{\eta}_N$, such that for each $\eta_p $ there exists  some vector perpendicular to $\eta_p$ in a small neighbourhood of $\theta_0=e_n$ such that the symmetric $2$- tensor $f_{ij}$ is uniquely determined by the numbers $f_{ij}(\eta_p)_i(\eta_p)_j$. First of all, let  $$S=\{\eta: \exists \theta \text{ in nbd.}(\theta_0)\text{ such that }\theta\in \xi_0^{\perp}, \langle\theta,\eta\rangle=0 \}$$
	We will first show that for $n\geq 3$, $\operatorname{Span}(S)=\mathbb{R}^{n}$. Let $\theta_0 = e_n$ and $\xi_0= e_{n-1}$ as has been the case in the proof of this proposition. Then it is immediately obvious that $e_i=\eta_i \in S$ for $i=1,\dots,n-1$. Now consider a vector $v=\epsilon e_1+e_n \in \xi_0^{\perp}$ such that $v \in nbd.(\theta_0)$. Then $\eta_n=e_1-\epsilon e_n$ also belongs to $S$. Clearly now $\operatorname{Span}(S)=\mathbb{R}^n$. Hence, $\exists \eta_1,\dots,\eta_n$ in the set $S$ which are linearly independent. Now we make the following claim:
	\begin{claim}
		There exists a linear combination $a_p\eta_p+a_q\eta_q$, $p,q= 1,\dots,n$ and $p<q$ for each $p$ and $q$ such that $a_p\eta_p+a_q\eta_q \in S$. Hence there exists $\binom{n}{2}$ such linear combinations for different pairs of $p$ and $q$ which can be listed as $\eta_l$,  $l=(n+1),\dots,\frac{n(n+1)}{2}$.
	\end{claim}
\begin{proof}
	Our goal is to show that for each pair $\eta_p, \eta_q$ as stated in the claim, $\exists \theta \text{ in nbd.}(\theta_0)$ such that $\theta\in \xi_0^{\perp}, \langle\theta,(a_p\eta_p+a_q\eta_q)\rangle=0 $. Since, $\eta_p$,$\eta_q$ are in $S$, hence there must exist $\theta_p$ and $\theta_q$ in $nbd.(\theta_0)\cap \xi_0^{\perp}$such that $\langle\theta_p,\eta_p\rangle=0 $ and $\langle\theta_q,\eta_q\rangle=0 $. Now we have 3 cases to consider:\\
	\textbf{Case 1}: $\langle\theta_q,\eta_p\rangle=0 $ and $\langle\theta_p,\eta_q\rangle=0$.\\
	In this case, it is clear that any linear combination $\eta_p$ and $\eta_q$ is in $S$. This is because $\langle \theta_p, (\eta_p+\eta_q)\rangle=\langle \theta_q (\eta_p+\eta_q)\rangle=0$.\\
	\textbf{Case 2}: Only one of $\langle\theta_q,\eta_p\rangle $ or $\langle\theta_p,\eta_q\rangle$ is zero.\\ Without the loss of generality, let $\langle\theta_q,\eta_p\rangle =0 $. In this case also, any linear combination $a_p\eta_p+a_q\eta_q \in S$. This is because, $\langle \theta_q,(a_p\eta_p+a_q\eta_q)\rangle=0$.\\
	\textbf{Case 3}: $\langle\theta_q,\eta_p\rangle \neq 0 $ and $\langle\theta_p,\eta_q\rangle \neq 0$\\
	Consider the vector $\theta_p+\epsilon \theta_q$ where $\epsilon\neq0$ is chosen such that $\theta_p+\epsilon \theta_q \in nbd.(\theta_0)\cap \xi_0^{\perp}$. Now let $a_p\eta_p+a_q\eta_q$ be such that $a_p=1$ and $a_q=-\frac{\langle \theta_p,\eta_q\rangle}{\epsilon(\langle \theta_q,\eta_p \rangle)}$. Then it can be readily seen that $\langle (\theta_p+\epsilon \theta_q),(a_p\eta_p+a_q\eta_q)=0$ for the above mentioned choice for $a_p$ and $a_q$. Hence this $a_p\eta_p+a_q\eta_q \in S$. \\
	This proves our claim.
\end{proof}
\noindent So consider the collection of $N=\frac{n(n+1)}{2}$ vectors $\eta_k$ as listed above for $ k=1,\dots,N$ (where we represent $\frac{\eta_k}{|\eta_k|}$ as $\eta_k$ by a slight abuse of notation). The set $\{\eta_k^{\otimes2}\}_{k=1}^{k=N}$ is linearly independent and determines $f_{ij}$ from the numbers: $f_{ij}\eta_k^i\eta_k^j$ for $k=1,\dots,N$.
 Coming back to the proof of Proposition \ref{SUanalogue}, we can get $N$ equations of the kind \ref{eq:7}, with symbols $B_k$ such that $\sigma_p(B_k)=(\eta_k)^{\otimes 2}$ for $k=1,\dots,N$. Together these $N$ equations can be thought of as a tensor valued operator applied to tensor $f$, and then by similar analysis as in the proof of  \cite [Proposition 2]{SU3}, we conclude the proof of Proposition \ref{SUanalogue}.
\end{proof}

\section {Proof of the support theorem}

%\begin{theorem} Let $(M,g)$ be a simple real analytic Riemannian manifold of dimension $n\geq 3$. Let $\widetilde{M}$ be a real analytic extension of this manifold. Let $\mathcal{A}$ be any open set of geodesics of $\widetilde{M}$ such that each geodesic $\gamma \in \mathcal{A}$ is continuously deformable within the set $\mathcal{A}$ to some point on the boundary $\partial\widetilde{M}$. Let $M_{\mathcal{A}}$ be the set of points of $M$ that belong to $\mathcal{A}$. If $Jf(\gamma,\eta)=0$ for every $\gamma \in \mathcal{A}$ and for every $\eta \in \gamma^{\perp}$, then $f=0$ on $M_{\mathcal{A}}$. 
%\end{theorem}
\begin{proof}[Proof of Theorem \ref{Main_theorem}]
	We will use similar ideas as in \cite{K1} for the proof. We extend $f$ outside $M$ by zero, i.e. $f=0$ in $\widetilde{M}\setminus M$. Let $\gamma_0$ be a fixed geodesic in $\mathcal{A}$ which is continuously deformable within the set $\mathcal{A}$ to some geodesic $\gamma_1$ in $\mathcal{A}$ that does not intersect $\text{supp}(f)$.  Let $\gamma_t$ be an intermediate geodesic in the deformation. As $f$ is compactly supported within $M$ and $\mathcal{A}$ is open, hence such a $\gamma_1$ can always be found. Furthermore, we parametrize these geodesics by their starting points $x\in \partial{\widetilde{M}}$ and their initial directions $\theta \in S^{n-1}$. Accordingly, let $(x_t,\theta_t)$ denote the starting point and the initial direction for the geodesic $\gamma_t$. Consider a ``cone of geodesics" formed by geodesics around $\gamma_t$ having the same starting point as $\gamma_t$ i.e. $x_t$ and with initial directions that are sufficiently close to $\theta_t$ such that this cone is still in $\mathcal{A}$. Clearly, by a compactness argument, there is one such cone that does not intersect $\text{supp}(f)$. Now carry out the construction of such cones for each $\gamma_t$ in the aforementioned deformation, i.e. for all values of $t\in [0,1]$. We will call these cones $C_t$. Let $$t_1= \inf\{t\in[0,1]:C_{t_2}\cap \text{supp}(f)=\emptyset, \quad \forall t_2>t\}.$$
	Suppose that $t_1>0$. Then the cone $C_{t_1}$ intersects $\text{supp}(f)$ at some point, say $p_0$. By the Sato- Kawai- Kashiwara Theorem, $(p_0,\zeta_0) \in WF_A(f)$ where $\zeta_0$ is normal to the cone $C_{t_1}$. But this counters Proposition \ref{SUanalogue}. This shows that $f$ is zero in a neighborhood of $p_0$. By a compactness argument this shows that $f=0$ for a positive distance away from the cone $C_{t_1}$, see \cite{K1}. But this would contradict the claim that $t_1>0$ is the infimum. Hence $t_1=0$ which completes the proof of our main theorem.
\end{proof}
\bibliographystyle{plain}
\bibliography{article_trt_april10}

\end{document}